\newtheorem{Th}{Theorem}[section]
\newtheorem{Lem}[Th]{Lemma}
\newtheorem{Fact}[Th]{Fact}
\theoremstyle{definition}
\newtheorem{Defs}[Th]{Definitions}
\newtheorem{Ex}[Th]{Example}
\theoremstyle{remark}
\newcommand\SP{\mathcal P}
\newcommand\N{\mathbb N}
\newcommand\bF{\boldsymbol F}
\newcommand\bH{\boldsymbol H}
\newcommand\bT{\boldsymbol T}
\newcommand\bde{\boldsymbol e}
\newcommand\bx{\boldsymbol x}
\newcommand\by{\boldsymbol y}
\newcommand\dbx{d_{\bx}}
\newcommand\dby{d_{\by}}
\newcommand\dbz{d_{\bz}}
\newcommand\dbxy{d_{\bx+\by}}
\newcommand\bz{\boldsymbol z}
\newcommand\ep{\varepsilon}
\newcommand\la{\lambda}
\newcommand\dl{\delta}
\newcommand\hT{\hat T}
\newcommand\bmu{\boldsymbol\mu}
\newcommand\dbmu{d_{\bmu}}
\newcommand\sbs{\subset}
\newcommand\leqs{\leqslant}
\newcommand\geqs{\geqslant}
\newcommand\oo{\infty}
\newcommand\loo{\ell_{\oo}}
\newcommand\lone{\ell_1}
\newcommand\supp{\operatorname{supp}}
\newcommand\nonat{\operatorname{nonat}}
\newcommand\dbf{d_{\bF}}
\newcommand\nor{\|{\cdot}\|}
\begin{document}
\title[Nonatomicity in $\lone$]{Nonatomic submeasures on $\N$ and the Banach space $\lone$}

\author[L.~Drewnowski]{Lech Drewnowski}
\address{Faculty of Mathematics and Computer Science\\
A.~Mickiewicz University\\
Uniw.Pozna\'nskiego 4, 61--614 Pozna\'n, Poland}
\email{drewlech@amu.edu.pl}
\date\today

\keywords{Banach space $\lone$, nonatomic submeasure on $\N$, nonatomic sequence in $\lone$}
\subjclass[2020]{11B05, 28A12, 46A45, 46B03, 46B25, 46B45, 47L05}

\begin{abstract}
Nonatomic bounded sequences in $\lone$, that is, those giving rise to nonatomic submeasures on $\N$ are introduced and shown to form a closed subspace $\nonat(\lone)$ of $\loo(\lone)$, and some spaces of relevant operators in $\lone$ are considered.
\end{abstract}
\maketitle
\section{ Introduction}
This note may be considered a companion to the earlier joint works of T. {\L}uczak, N. Alon and the present author (\cite{DL1}, \cite{DL2},\cite{DL3},\cite{ADL}), dealing with nonatomic submeasures (or densities) on $\N$, more precisely - on its powerset $\SP(\N)$. Cf. also \cite{FP}, a complement to \cite{DL2}. For a comprehensive and yet fully sufficient general information on submeasures on $\N$, we recommend the reader \cite{DL2}. Our aim here is to provide a functional-analytic interpretation of the most promising constructions of nonatomic submeasures in terms of the Banach space $\lone$, and with the help of the results proven get some insight into the nature of the main open problem in this area.
The papers cited above were focused on submeasures on $\N$ of the type
$\dbmu=\limsup_{n\to\oo}\mu_n$ (understood setwise) for bounded sequences $\bmu=(\mu_n)$ of finite positive measures on $\N$. Now, any such a measure $\mu$ is intimately related to a convergent positive series $\sum_j\xi_j$ or to a positive element $x=(\xi_j)$ of the Banach lattice $\lone$.
A moment's reflection reveals that one may actually work with arbitrary elements $x\in \lone$, using, however, the modulus $|x|=(|\xi_j|)$ of $x$ to define a desired measure $\mu$ on $\N$. In the next step,  one makes a connection with the definition of the standard norm $\nor_1$ of $\lone$, and realizes that $\mu$ can also be obtained by the formula $\mu(A)=\|1_A x\|_1$ ($A\sbs\N$). Here $1_A$ stands for the characteristic function of the set $A$, and the associated \textit{characteristic} projection $P_A:x\mapsto 1_A x$ in $\lone$ has implicitly entered the play.
All this leads to the conclusion that the $\limsup$ submeasures described above can be realized as $\dbx$, where $\dbx(A)=\limsup_{n\to\oo}\|1_A x_n\|_1$, and $\bx=(x_n)$ is a bounded sequence in $\lone$, in other words, an element of the Banach space $\loo(\lone)$ with its obvious supnorm $\|\bx\|_\oo=\sup_n\|x_n\|_1$.
In these terms, the main question of interest in those papers may be phrased as follows: For what $\bx\in \loo(\lone)$ is the submeasure $\dbx$ nonatomic, that is, has the property that for every $\ep>0$ there exists a finite partition $(A_i:i\in I)$ of $\N$ (often called an $\ep$-\textit{fine} partition of $\N$ for $\bx$) such that $\dbx(A_i)\leqs\ep$ for each $i\in I$. When it is so, the sequence $\bx$ itself will simply be called \textit{nonatomic}. We denote the set of all such $\bx$ by $\nonat(\lone)$. Actually, the authors of \cite{DL1} considered the above mentioned problem as too difficult, and turned their attention to an apparently simpler case of bounded sequences in $(\lone)_{\rm fin}:=\{x\in \lone:|\supp(x)|<\oo\}$ (obviously a dense subspace of $\lone$), i.e., elements of $\loo((\lone)_{\rm fin})$, especially those similar to that used in the definition of the classic \textit{upper density} $\bar{d}$ on $\N$ (namely $\bx=(x_n)$ with $x_n=n^{-1}1_{[n]}$), a standard tool in number theory. As an argument speaking for such a reduction one may treat the following.
\begin{Fact}\label{fact:finsupp}
For any sequence $\bx=(x_n)\in l_\oo(\lone)$, and any sequence $(\ep_n)$ of positive numbers, there is a sequence $\bz=(z_n)\in l_\oo((\lone)_{\rm fin})$ such that, for each $n$, $\|x_n-z_n\|_1<\ep_n$. Consequently, when $\ep_n\to 0$, $\dbx=\dbz$, and if one is nonatomic, so is the other. In particular, $\nonat((\lone)_{\rm fin})$ is dense in $\nonat(\lone)$.
\end{Fact}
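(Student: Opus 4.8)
The plan is to build the approximating sequence $\bz$ coordinatewise by truncating each $x_n$ to a finite initial segment of $\N$. Fix $\bx=(x_n)\in\loo(\lone)$ and positive numbers $(\ep_n)$. For each fixed $n$, writing $x_n=(\xi^{(n)}_j)_j$, the summability of $\sum_j|\xi^{(n)}_j|$ forces its tails to vanish, so I can choose an index $k_n$ with $\sum_{j>k_n}|\xi^{(n)}_j|<\ep_n$. Setting $z_n:=1_{[k_n]}x_n$ then gives $z_n\in(\lone)_{\rm fin}$ and $\|x_n-z_n\|_1=\sum_{j>k_n}|\xi^{(n)}_j|<\ep_n$. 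Since $|z_n|\leqs|x_n|$ pointwise, one automatically gets $\|z_n\|_1\leqs\|x_n\|_1\leqs\|\bx\|_\oo$, so $\bz=(z_n)\in\loo((\lone)_{\rm fin})$ with no separate boundedness to verify. This settles the first assertion.

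For the second assertion I would assume $\ep_n\to0$ and rely on a single estimate, uniform in the set $A\sbs\N$: for every such $A$ and every $n$,
\[
\bigl|\,\|1_A z_n\|_1-\|1_A x_n\|_1\,\bigr|\leqs\|1_A(z_n-x_n)\|_1\leqs\|z_n-x_n\|_1<\ep_n,
\]
the first inequality being the reverse triangle inequality and the second holding because the factor $1_A$ only deletes coordinates. Letting $n\to\oo$ and using $\ep_n\to0$ shows $\limsup_n\|1_A z_n\|_1=\limsup_n\|1_A x_n\|_1$, that is $\dbz(A)=\dbx(A)$, for every $A\sbs\N$. Hence $\dbz=\dbx$ as submeasures, and since nonatomicity is a property of the submeasure alone, $\bx$ is nonatomic if and only if $\bz$ is.

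For the density statement, let $\bx\in\nonat(\lone)$ and $\delta>0$ be given. Here one must resist the temptation of the constant choice $\ep_n\equiv\delta$: that would keep $\bz$ within $\delta$ of $\bx$ in $\loo(\lone)$ but would \emph{not} grant $\dbz=\dbx$, since the equality of the previous paragraph rested on $\ep_n\to0$. The remedy is to impose both requirements simultaneously, e.g.\ $\ep_n:=\min(\delta/2,1/n)$. Then $\ep_n\to0$ forces $\dbz=\dbx$, so $\bz$ is nonatomic and thus $\bz\in\nonat((\lone)_{\rm fin})$, while $\sup_n\ep_n\leqs\delta/2$ yields $\|\bx-\bz\|_\oo=\sup_n\|x_n-z_n\|_1\leqs\delta/2<\delta$. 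As $\delta$ was arbitrary, $\nonat((\lone)_{\rm fin})$ is dense in $\nonat(\lone)$.

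No step presents a genuine obstacle here; the whole argument is the tail estimate for truncation, one uniform triangle-inequality bound, and bookkeeping. The only point that actually requires thought is precisely the coupling of the two constraints on $(\ep_n)$ in the last paragraph, where $\ep_n\to0$ is needed for $\dbz=\dbx$ while a uniform bound on $\ep_n$ is needed for proximity in the supnorm.
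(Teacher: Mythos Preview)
The paper states this Fact without proof, so there is no argument to compare against; your proof is correct and is presumably what the author had in mind.

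One small remark on the density paragraph: your claim that one ``must resist the temptation of the constant choice $\ep_n\equiv\delta$'' is not actually true for the specific $\bz$ you built. Since $z_n=1_{[k_n]}x_n$ satisfies $|z_n|\leqs|x_n|$ (as you yourself noted when checking boundedness), one has $\|1_A z_n\|_1\leqs\|1_A x_n\|_1$ for every $A$ and $n$, hence $\dbz\leqs\dbx$ setwise. Thus any $\ep$-fine partition for $\bx$ is automatically $\ep$-fine for $\bz$, and $\bz$ is nonatomic whenever $\bx$ is, without ever needing $\dbz=\dbx$. The constant choice $\ep_n\equiv\delta/2$ would therefore suffice. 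Your coupling $\ep_n=\min(\delta/2,1/n)$ is of course harmless, and has the virtue of working for \emph{any} approximating sequence $\bz$ with $\|x_n-z_n\|_1<\ep_n$, not just truncations; but the commentary overstates the difficulty.
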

In order to convince the reader that nonatomic submeasures on $\N$ are not a rarity, we now present, in a  somewhat simplified form, the main result, Theorem 2.1 from \cite{DL1}, proved there by elementary, though worthy of attention probabilistic arguments. For this, some additional notation is needed.
\begin{Defs}\label{LT-defs}
For a finite nonempty set $F\sbs\N$, we let $x_F$ denote the norm one element of $\lone$ defined by $x_F=m^{-1}1_F = \sum_{j\in F}m^{-1}e_j$, where $m=|F|$.
Next, for each $\la>0$, we let $\bT_{\la}$ denote the set of all sequences $\bF=(F_n)$ of nonempty finite sets in $\N$ such that $q_m(\bF):=|\{n:|F_n|=m\}|\leqs 2^{\la m}$ for  each $m\in\N$. We will also write $\bx_{\bF}$ for the sequence $(x_{F_n})_n\in \loo(\lone)$.
\end{Defs}
\begin{Th} \label{LT:th main}
Every sequence $\bF=(F_n)\in \bT_\la$ ($\la>0$) yields a nonatomic submeasure $\dbf$ on $\N$ via the formula
$$
\dbf(A) =d_{\bx_{\bF}}(A)=\limsup_{n\to\oo}\|1_Ax_{F_n}\|_1 =\limsup_{n\to\oo}\dfrac{|A\cap F_n|}{|F_n|}.
$$
\end{Th}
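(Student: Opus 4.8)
The plan is to prove nonatomicity by the probabilistic method: for a given $\ep>0$ I will exhibit a \emph{random} partition of $\N$ into finitely many blocks and show that, with probability one, each block $A$ satisfies $\dbf(A)\leqs\ep$; the existence of even a single good outcome then furnishes the required $\ep$-fine partition. That $\dbf$ is a submeasure at all is immediate from the monotonicity and subadditivity of $A\mapsto\|1_A x\|_1$ together with the subadditivity of $\limsup$, so the entire content of the statement is the nonatomicity.

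First I would fix $\ep>0$, choose an integer $k>1/\ep$, and partition $\N$ at random by assigning to each $j\in\N$ an independent label $c_j$ uniform on $\{1,\dots,k\}$, setting $A_i=\{j:c_j=i\}$. For a set $F_n$ of size $m=|F_n|$ the count $|A_i\cap F_n|$ is then distributed as $\mathrm{Binomial}(m,1/k)$, with mean $m/k\leqs\ep m$. Since $\ep>1/k$, the Chernoff bound for the upper tail of the binomial yields $\Pr(|A_i\cap F_n|>\ep m)\leqs 2^{-cm}$, where $c=c(\ep,k)=D(\ep\,\|\,1/k)/\ln 2>0$ is the base-$2$ relative-entropy exponent.

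The crux is to make this estimate uniform in $n$, and here the defining constraint $\bF\in\bT_\la$ enters decisively. Grouping the indices $n$ by the common value $m=|F_n|$ and invoking $q_m(\bF)\leqs 2^{\la m}$, I would bound the total deviation probability by
\[
\sum_{i=1}^{k}\sum_{n}\Pr\!\Bigl(\tfrac{|A_i\cap F_n|}{|F_n|}>\ep\Bigr)\leqs k\sum_{m\geqs 1}q_m(\bF)\,2^{-cm}\leqs k\sum_{m\geqs 1}2^{-(c-\la)m}.
\]
The decisive point is that $c=D(\ep\,\|\,1/k)/\ln 2\sim\ep\log_2 k\to\oo$ as $k\to\oo$, so by enlarging $k$ (now depending on both $\ep$ and $\la$) I can force $c>\la$, whereupon the geometric series converges.

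Finally, since for each fixed $i$ the series $\sum_n\Pr(|A_i\cap F_n|/|F_n|>\ep)$ converges, the (first) Borel--Cantelli lemma shows that almost surely only finitely many $n$ violate $|A_i\cap F_n|/|F_n|\leqs\ep$; hence $\dbf(A_i)=\limsup_n|A_i\cap F_n|/|F_n|\leqs\ep$ almost surely. Taking the finite union over $i=1,\dots,k$ preserves probability one, so some outcome yields a genuine partition $(A_1,\dots,A_k)$ of $\N$ with $\dbf(A_i)\leqs\ep$ for every $i$, which is exactly $\ep$-fineness. I expect the main obstacle to be precisely the balance in the displayed estimate: one must verify that the Chernoff exponent grows like $\ep\log_2 k$ and so eventually dominates the fixed rate $\la$, since this is the single place where the hypothesis $\bF\in\bT_\la$ is used and where the argument would collapse without the $2^{\la m}$ ceiling on $q_m$.
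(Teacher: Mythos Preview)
Your argument is correct and is precisely the ``elementary, though worthy of attention probabilistic'' proof that the present paper attributes to \cite{DL1}; the paper itself does not supply a proof of this theorem but quotes it from that reference. Your random $k$-colouring, Chernoff tail bound with exponent $D(\ep\,\|\,1/k)$, summation over the size classes using $q_m(\bF)\leqs 2^{\la m}$, and Borel--Cantelli conclusion reproduce exactly the strategy alluded to, including the key point that $k$ must be chosen large in terms of both $\ep$ and $\la$ so that the Chernoff rate beats $\la$.
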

 The question of a clear workable description of $\nonat(\lone)$ remains the most challenging open problem in this area. The results of this paper, though by no means decisive, may hopefully be of some help in a better understanding of the situation.
\section{The results}
Our two main results are: the first, appearing  right now, and the second, after a break for some preparations, stated as Theorem \ref{th:Tonlone} below.
\begin{Th}\label{th:nonatloolone}
 $N=\nonat(\lone)$ is a closed subspace of $\loo(\lone)$ containing a linear isometric copy of the Banach space $\loo$. Moreover, $N$ is solid in $\loo(\lone)$ in the usual sense, as well as in the sense of (equivalent!) relations of domination ($\ll$) or $0$-domination ($\ll_0$) transferred in a natural manner from the realm of submeasures to the space $\loo(\lone)$. In particular, $N$ is $\loo$-stable: $(a\in\loo,\bx\in N \implies a\bx\in N)$.
\end{Th}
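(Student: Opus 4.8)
The plan is to reduce the whole statement to two elementary features of the assignment $\bx\mapsto\dbx$. The first is that it is a contraction for the relevant sup-distances: for every $A\sbs\N$,
\[
|\dbx(A)-\dby(A)|\leqs\sup_n\bigl|\,\|1_Ax_n\|_1-\|1_Ay_n\|_1\,\bigr|\leqs\sup_n\|x_n-y_n\|_1=\|\bx-\by\|_\oo,
\]
the first inequality being the general Lipschitz bound for $\limsup$. The second feature is monotonicity in the modulus: if $|y_n(j)|\leqs|x_n(j)|$ for all $n,j$, then $\|1_Ay_n\|_1\leqs\|1_Ax_n\|_1$ termwise, hence $\dby(A)\leqs\dbx(A)$ for every $A$. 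Together with the subadditivity $d_{\bx+\by}(A)\leqs\dbx(A)+\dby(A)$, these carry almost everything.

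First I would settle the algebraic and topological claims. Scalars are trivial ($d_{c\bx}=|c|\,\dbx$), and for sums, given $\ep>0$ I take an $\ep/2$-fine partition for $\bx$ and one for $\by$ and pass to their common refinement; on each cell both $\dbx$ and $\dby$ are $\leqs\ep/2$, so $d_{\bx+\by}\leqs\ep$, and $N$ is a subspace. Closedness is the contraction estimate: if $\bx^{(k)}\to\bx$ with $\bx^{(k)}\in N$, choose $k$ with $\|\bx-\bx^{(k)}\|_\oo<\ep/2$ and an $\ep/2$-fine partition for $\bx^{(k)}$; by the displayed bound it is $\ep$-fine for $\bx$. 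Usual solidity is immediate from monotonicity in the modulus, since an $\ep$-fine partition for $\bx$ is automatically $\ep$-fine for any $\by$ with $|\by|\leqs|\bx|$. Finally $\loo$-stability — the asserted \emph{in particular} — also follows: for $a\in\loo$ one has $|a\bx|\leqs\|a\|_\oo|\bx|$ pointwise and $\|a\|_\oo\bx\in N$, so solidity gives $a\bx\in N$.

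For the isometric copy of $\loo$ I would not argue directly but lean on Theorem \ref{LT:th main} and solidity. Partition the time axis $\N$ into infinitely many infinite sets $(T_i)_i$, and choose nonempty finite sets $G_n\sbs\N$ with pairwise distinct cardinalities, so that the sequence $\bF=(G_n)_n$ satisfies $q_m(\bF)\leqs1$ and hence lies in $\bT_\la$; thus $\bx_{\bF}\in N$ by Theorem \ref{LT:th main}. For $a=(a_i)\in\loo$ set $(Ta)_n=a_i\,x_{G_n}$ when $n\in T_i$. Then $T$ is linear, $\|(Ta)_n\|_1=|a_i|$ for $n\in T_i$ yields $\|Ta\|_\oo=\sup_i|a_i|=\|a\|_\oo$, and $|Ta|\leqs\|a\|_\oo|\bx_{\bF}|$ pointwise, so $Ta\in N$ by solidity. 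Hence $T$ is a linear isometry of $\loo$ into $N$.

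The hard part is the last clause. With $\dby\ll\dbx$ meaning that for each $\ep$ there is $\delta$ with $\dbx(A)\leqs\delta\Rightarrow\dby(A)\leqs\ep$, and $\dby\llo\dbx$ meaning $\dbx(A)=0\Rightarrow\dby(A)=0$, one has $\ll\Rightarrow\llo$ for free, and solidity with respect to $\ll$ is easy: given $\ep$, pick $\delta$ from $\dby\ll\dbx$ and a $\delta$-fine partition from nonatomicity of $\dbx$, which is then $\ep$-fine for $\by$. The obstacle is the converse $\llo\Rightarrow\ll$ (for $\bx\in N$), which upgrades null-domination to the uniform $\ep$--$\delta$ form and is exactly the force of the ``(equivalent!)''; here the usual measure-theoretic proof is unavailable, since $\dbx$ is only finitely, not countably, subadditive. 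The one genuine gain from $\bx\in N$ is that nonatomicity makes $\dbx$ vanish on finite sets, so by $\llo$ does $\dby$, and both submeasures become insensitive to initial segments. Reasoning by contraposition from $\ep_0>0$ and sets $A_k$ with $\dbx(A_k)\to0$ but $\dby(A_k)\geqs\ep_0$, I would attempt a Drewnowski--Rosenthal gliding-hump extraction in the $j$- and $n$-variables simultaneously to build a single $A$ with $\dbx(A)=0$ yet $\dby(A)>0$, contradicting $\llo$. Taming the ``head'' interaction between blocks at large times — where finite but growing overlaps threaten to revive $\dbx$-mass despite each $\dbx(A_k)$ being tiny — is the delicate step, and it is precisely where the special structure of $\limsup$-submeasures arising from bounded sequences in $\lone$ (as developed in the cited works) has to be exploited. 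Once $\llo\Rightarrow\ll$ is secured, solidity with respect to $\llo$ coincides with solidity with respect to $\ll$, finishing the proof.
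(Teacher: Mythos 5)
Your treatment of the principal part --- linearity, closedness, usual solidity, $\loo$-stability, and the $\ll$-solidity --- is correct and essentially the same as the paper's: sums are handled by passing to the common refinement of two fine partitions, closedness by the Lipschitz estimate $|\dbx(A)-\dby(A)|\leqs\|\bx-\by\|_\oo$, and the rest by monotonicity of $A\mapsto\limsup_n\|1_Ax_n\|_1$ in the modulus. (Your identity $d_{c\bx}=|c|\dbx$ is cleaner than the paper's detour through an integer $m\geqs|a|$ and solidity.) Your construction of the isometric copy of $\loo$ is actually more careful than the paper's: the paper takes an arbitrary $\bH$ with $|H_n|\to\oo$ and maps $a\mapsto a\bx_{\bH}$, but by the very counterexample it quotes from \cite{DL1} a sequence with $|H_n|\to\oo$ need not yield a nonatomic $\bx_{\bH}$; your choice of sets of pairwise distinct cardinalities forces $q_m\leqs 1$, places the base sequence in $\bT_\la$, and lets Theorem \ref{LT:th main} plus usual solidity finish the job.

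The genuine gap is the $\llo$-solidity, equivalently the implication (for $\bx\in N$) from $\dby\llo\dbx$ to $\dby\ll\dbx$. You correctly isolate this as the only hard step and correctly note that the classical exhaustion argument is unavailable because $\dbx$ is only finitely subadditive, but you then stop at ``I would attempt a gliding-hump extraction,'' explicitly flagging the head-interaction between blocks as unresolved. That is a sketch of an intention, not a proof, and it is precisely the content carried by the ``(equivalent!)'' in the statement. The paper does not prove this internally either: it discharges the obligation by citing \cite[Cor.~2.7]{DL2}, which establishes the equivalence of $\ll$ and $\llo$ against a nonatomic dominating submeasure. To complete your argument you must either carry out the extraction you gesture at or invoke that external result; everything else in your proposal stands.
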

Let us explain that `solid in the usual sense' means that $\by\in\nonat(\lone)$ when $\by\in\loo(\lone)$ and $|\by|\leqs|\bx|$ for some $\bx\in\nonat(\lone)$. That $N$ has this property is obvious. The other variants looks formally very similar, but with $|\by|\leqs|\bx|$ replaced by $\dby\ll\dbx$ or $\dby\ll_0\dbx$, resp. The first means that for every $\ep>0$ there is $\dl>0$ such that $\dby(A)\leqs\ep$ whenever $\dbx(A)\leqs\dl$, while the second that $\dby(A)=0$ whenever $\dbx(A)=0$. What we assert in the last part of the theorem  is clear in the case of $\ll$, while in the case of $\ll_0$ an appeal to \cite[Cor. 2.7]{DL2} is needed.
\begin{proof} In view of the above explanations, we only prove the principal part.
Let $\bx=(x_n),\by=(y_n)\in N$. It is obvious that $\dbxy\leqs \dbx+\dby$. Next, given $\ep>0$, choose $\ep/2$-fine finite partitions $(A_i:i\in I)$ and $(B_j:j\in J)$ of $\N$ for $\bx$ and  $\by$, resp. Clearly, also the sets $C_k:=A_i\cap B_j$ for $k=(i,j)\in K=I\times J$ form a finite partition of $\N$. Moreover, for each such $k$ one has $\dbxy(C_k)\leqs \dbx(C_k)+\dby(C_k)\leqs\dbx(A_i)+\dby(B_j)\leqs\ep$. In consequence, $\bx+\by\in N$. Now, for  any scalar $a$, choose any $m\in\N$ $\geqs|a|$. Then $m\bx\in N$ by the previous part, and since $|a\bx\leqs|m\bx|$, $a\bx\in N$.
In turn, let $\bz=(z_n)$ belong to the closure of $N$. Again, take any $\ep>0$. Next, choose $\bx\in N$ with $\|\bz-\bx\|_\oo\leqs\ep/2$, and an $\ep/2$-fine partition for $\bx$ as above. Then $\dbz\leqs\dbx+\ep/2$, and hence $\dbz(A_i)\leqs \ep/2+\ep/2=\ep$ for each $i\in I$. Thus $\bz\in\nonat(\lone)$. Finally, let $\bH=(H_n)$ be a sequence of finite nonempty sets in $\N$ such that $h_n=|H_n|\to\oo$. Also, let $\bx_{\bH}=(x_{H_n})$ be defined as in Definition \ref{LT-defs}. Then the map $a=(a_n)\mapsto a \bx_{\bH}=(a_n x_{H_n})$  is clearly a linear isometric embedding of $\loo$ into $\nonat(\lone)$.
The proof is over.
\end{proof}

\begin{Fact}
Every sequence $\bx=(x_n)\in\nonat(\lone)$ converges to zero uniformly on $\N$; that is, $\|x_n\|_\oo\to 0$.
\end{Fact}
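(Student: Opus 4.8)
The plan is to argue by contradiction, exploiting the finiteness built into the definition of nonatomicity. Writing $x_n=(\xi_{n,j})_j$, the assertion $\|x_n\|_\oo\to 0$ means precisely that $\sup_j|\xi_{n,j}|\to 0$ as $n\to\oo$. So I would suppose the contrary: there is a $\dl>0$ and a subsequence $(n_k)$ with $\|x_{n_k}\|_\oo\geqs\dl$ for every $k$, and for each $k$ I would fix a coordinate $j_k$ witnessing this, say $|\xi_{n_k,j_k}|\geqs\dl/2$.

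The key step is then to invoke the defining nonatomicity of $\dbx$ at a scale just below $\dl/2$: choosing $\ep=\dl/4$, there is a finite partition $(A_i:i\in I)$ of $\N$ with $\dbx(A_i)\leqs\dl/4$ for every $i\in I$. Since $I$ is finite while there are infinitely many indices $k$, the pigeonhole principle yields a single block $A_{i_0}$ containing $j_k$ for all $k$ in some infinite set $K$. For each such $k$ one has $\|1_{A_{i_0}}x_{n_k}\|_1\geqs|\xi_{n_k,j_k}|\geqs\dl/2$, whence $\dbx(A_{i_0})=\limsup_{n\to\oo}\|1_{A_{i_0}}x_n\|_1\geqs\dl/2>\dl/4$, contradicting the choice of the partition. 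This would complete the argument.

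The step I expect to carry the whole weight is the pigeonhole. A priori the large coordinate $j_k$ could drift off to infinity as $k$ grows, so no single coordinate need retain positive mass; indeed the companion observation that nonatomicity forces $\dbx(\{j\})=0$, i.e. $\xi_{n,j}\to 0$ for each fixed $j$, is by itself too weak to conclude. What rescues the argument is exactly that nonatomicity supplies a partition into \emph{finitely many} blocks, so the wandering mass $\dl/2$ must accumulate in one of them along an infinite subsequence. I would also note in passing that the inequality $\|1_{A_{i_0}}x_{n_k}\|_1\geqs|\xi_{n_k,j_k}|$ is immediate, since $j_k\in A_{i_0}$, and that $\dbx$ is monotone, so passing from the block $A_{i_0}$ to the single witnessing coordinate only decreases the norm.
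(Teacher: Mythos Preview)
Your argument is correct. The paper's proof rests on the same idea---the finite $\ep$-fine partition together with $\|\cdot\|_\oo\leqs\|\cdot\|_1$---but runs it directly rather than by contradiction: given $\ep>0$ and an $\ep$-fine partition $(A_i:i\in I)$, one has for each $i$ that eventually $\|P_{A_i}x_n\|_\oo\leqs\|P_{A_i}x_n\|_1\leqs\ep$, and since $I$ is finite a common threshold $m$ works for all $i$ simultaneously, giving $\|x_n\|_\oo=\max_{i\in I}\|P_{A_i}x_n\|_\oo\leqs\ep$ for $n\geqs m$. Your pigeonhole step is the contrapositive of exactly this ``finite $I$ gives a common $m$'' observation, so the two proofs are really the same engine in different wrappers; the direct version is a bit shorter and makes the role of the inequality $\|P_{A_i}x_n\|_\oo\leqs\|P_{A_i}x_n\|_1$ more visible, while yours has the merit of isolating where the argument would break (coordinates drifting to infinity) and explaining why it does not.
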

\begin{proof}
Given $\ep>0$, there exists an $\ep$-fine finite partition $(A_i:i\in I)$ of $\N$ for $\bx$. From this it follows that for some $m$ and all $n\geqs m$ one has $\|P_{A_i} x_n\|_\oo\leqs \|P_{A_i} x_n\|_1\leqs\ep$. In consequence, $\|x_n\|_\oo =\max_{i\in I} \|P_{A_i} x_n\|_\oo\leqs \ep$ for all $n\geqs m$, and we are done.
\end{proof}
Before proceeding, we introduce some natural subspaces of $\loo(\lone)$, situated in a close vicinity of $\nonat(\lone)$. We shall denote by $\loo(\lone,\overset{p}{\to}0)$ and $\loo(\lone,\overset{u}{\to}0)$ the closed subspaces of $\loo(\lone)$ consisting of sequences $\bx=(x_n)$ that converge to zero pointwise on $\N$) or uniformly on $\N$, respectively. The chain of inclusions displayed below may help the reader to visualize the situation.
$$
c_0(\lone)\sbs\nonat(\lone)\sbs \loo(\lone,\overset{u}{\to}0)\sbs \loo(\lone,\overset{p}{\to}0)\sbs \loo(\lone).
$$
The second inclusion from the left was shown in the Fact above, the other are trivial, and all are proper. That it is so for the second one can be seen from \cite[Th. 2.2]{DL1}, where a sequence $\bF=(F_n)$ of finite nonempty subsets of $\N$
was constructed such that for any $m$, $|F_n|=m$ for finitely many $n$'s only, and thus $\|x_{F_n}\|_\oo\to 0$, but the submeasure $\dbf$ is not nonatomic; in other terms, the sequence $\bx =(x_{F_n})$ is not nonatomic.
In what follows we will denote by $\bde=(e_n)$ the standard basis of $\lone$, and by $L(\lone)$ the usual space of (continuous linear) operators acting in $\lone$.
Furthermore, for any $T\in L(\lone)$ we let $\hT$ denote the associated operator acting in $\loo(\lone)$ coordinatewise: $\hT\bx=(Tx_n)$ for every $\bx=(x_n)\in\loo(\lone)$. The lemma below incorporates most of the technicalities needed to prove our result.
\begin{Lem}
Let $T\in L(\lone)$ and denote $\bz=(z_n)=\hT\bde$. Further, let $\bx=(x_n)\in \loo(\lone,\overset{p}{\to}0)$ and denote $\by=(y_n)=\hT\bx$. Moreover, denote $t=\|\bz\|_\oo =\sup_n \|z_n\|_1$ and $r=\|\bx\|_\oo=\sup_n \|x_n\|_1$, and let $A\sbs\N$.
\begin{enumerate}
\item
$\limsup_n\|P_A y_n\|_\oo\leqs r\limsup_n\|P_A z_n\|_\oo$.
Hence if the sequence $\bz$ converges to zero uniformly on $A$, that is, $\|P_Az_n\|_\oo\to 0$, so does the sequence $\by$.
\item
$\dby(A)\leqs r\dbz(A)$. Consequently, $\dby(A)=0$ whenever $\dbz(A)=0$; moreover, for any $\dl>0$, every $\dl$-fine partition of $\N$ for $\bz$ (if such exists) is  $r\dl$-fine for $\by$.
\end{enumerate}
\end{Lem}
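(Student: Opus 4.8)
The plan is to pass to coordinates and reduce both parts of the Lemma to a single ``head/tail'' estimate, the only difference between them being the choice of norm. Write $x_n=(\xi_{n,j})_j$. Then $\sum_j|\xi_{n,j}|=\|x_n\|_1\leqs r$ for every $n$, and the hypothesis $\bx\in\loo(\lone,\overset{p}{\to}0)$ says precisely that $\xi_{n,j}\to 0$ as $n\to\oo$ for each fixed $j$. Since $z_j=Te_j$ and $T$ is linear and continuous, I would expand $x_n=\sum_j\xi_{n,j}e_j$ (a convergent series in $\lone$) and apply $T$ term by term to get $y_n=Tx_n=\sum_j\xi_{n,j}z_j$; applying the continuous projection $P_A$ then gives $P_Ay_n=\sum_j\xi_{n,j}P_Az_j$, again a series convergent in $\lone$.

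Next I would establish one estimate covering both parts. Let $\nu$ denote either the norm $\noroo$ or $\nor_1$. The triangle inequality for the convergent series yields
$$
\nu(P_Ay_n)\leqs\sum_j|\xi_{n,j}|\,\nu(P_Az_j).
$$
Setting $\beta=\limsup_j\nu(P_Az_j)$ and fixing $\ep>0$, I would choose $N_0$ with $\nu(P_Az_j)\leqs\beta+\ep$ for all $j\geqs N_0$ and split the sum at $N_0$. The tail is controlled uniformly in $n$: $\sum_{j\geqs N_0}|\xi_{n,j}|\,\nu(P_Az_j)\leqs(\beta+\ep)\sum_{j\geqs N_0}|\xi_{n,j}|\leqs(\beta+\ep)r$. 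The head $\sum_{j<N_0}|\xi_{n,j}|\,\nu(P_Az_j)$ is a \emph{fixed finite} sum whose coefficients $\xi_{n,j}$ all tend to $0$ as $n\to\oo$, while each factor $\nu(P_Az_j)\leqs\|z_j\|_1\leqs t$ is bounded; hence the head tends to $0$. Taking $\limsup_n$ and then letting $\ep\downarrow 0$ gives $\limsup_n\nu(P_Ay_n)\leqs r\beta=r\limsup_n\nu(P_Az_n)$.

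Specializing $\nu=\noroo$ gives the inequality of part (1), and $\nu=\nor_1$ gives $\dby(A)\leqs r\,\dbz(A)$, the main inequality of part (2). Everything else follows at once: if $\bz$ converges to zero uniformly on $A$ then $\beta=0$, so $\|P_Ay_n\|_\oo\to 0$; if $\dbz(A)=0$ then $\dby(A)\leqs r\cdot 0=0$; and if $(A_i:i\in I)$ is $\dl$-fine for $\bz$, i.e.\ $\dbz(A_i)\leqs\dl$ for all $i$, then $\dby(A_i)\leqs r\,\dbz(A_i)\leqs r\dl$, so the same partition is $r\dl$-fine for $\by$.

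The one genuinely load-bearing point — and the only place where more than boundedness of $\bx$ is used — is the treatment of the head of the series. The numbers $\nu(P_Az_j)$ need not be small for small $j$, so the estimate would simply fail for an arbitrary bounded $\bx$; it is precisely the pointwise convergence $\xi_{n,j}\to 0$ that, together with the uniform bound $\nu(P_Az_j)\leqs t$, annihilates the finitely many uncontrolled head terms in the limit $n\to\oo$ while the tail is absorbed into $(\beta+\ep)r$. The remaining technical checks — that $y_n=\sum_j\xi_{n,j}z_j$ converges in $\lone$ and that $P_A$ may be moved inside the sum — are routine consequences of the continuity of $T$ and of $P_A$.
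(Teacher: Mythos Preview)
Your proof is correct and follows essentially the same approach as the paper: expand $P_Ay_n=\sum_j\xi_{n,j}P_Az_j$, split the sum at an index past which $\nu(P_Az_j)$ is close to its $\limsup$, bound the tail by $(\beta+\ep)r$, and use pointwise convergence of $\bx$ together with the uniform bound $\nu(P_Az_j)\leqs t$ to kill the finite head. The only difference is cosmetic: you unify (1) and (2) by letting $\nu$ stand for either $\noroo$ or $\nor_1$, whereas the paper writes the two estimates out separately with slightly different $\ep$--$\dl$ bookkeeping.
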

\begin{proof}
For the sake of clarity and convenience, we shall use the notation $x=(x(j))$ for the elements $x$ in $\lone$ as the domain space of $T$, and $y=(y(k))$ for the elements $y$ of $\lone$ as its range space (in either case $j,k\in\N$). Moreover, the index $n$ in $(e_n)$ and $(z_n)$ will sometimes be changed to $j$. Clearly, $T$ is represented by the sequence $\bz$ so that for every $x=(x(j))=\sum_jx(j)e_j$ in $\lone$ one has $Tx=\sum_j x(j)z_j$ (the series being absolutely convergent in $\lone$ in its default norm $\nor_1$, hence also in the weaker norm $\nor_\oo$ induced from $c_0$). Hence $(Tx)(k)=\sum_j x(j)z_j(k)$ for each $k$. Also observe that $t=\|T\|$ and that $P_ATx=\sum_j x(j)P_Az_j$ ($A\sbs\N$). In some estimates below we use the fact that $\|P_A\|\leqs 1$.
Fix $\ep>0$, and next choose $\dl>0$ with $r\dl\leqs\ep/2$.
\begin{enumerate}
\item:  For $s:=\limsup_n\|P_A z_n\|_\oo$ choose $m$ so that $\|P_A z_n\|_\oo\leqs s+\dl$ for $n>m$; note that $ \|P_Az_n\|_\oo\leqs\|P_A z_n\|_1\leqs\|z_n\|_1\leqs t$ for $n\in [m]$. Also, as $x_n\overset{p}{\to}0$, there is $p$ such that $t\|1_{[[m]}x_n\|_1\leqs\ep/2$ for all $n\geqs p$. Now, for all $n\geqs p$ one has
    $$
    \begin{aligned}
    \|P_Ay_n\|_\oo &\leqs\sum_j |x_n(j)| \|P_A z_j\|_\oo\\
    &= \sum_{j> m} |x_n(j)| \|P_A z_j\|_\oo + \sum_{j\in[m]} |x_n(j)| \|P_A z_j\|_\oo\\
    &\leqs \|x_n\|_1 (s+\dl) + t\|1_{[[m]}x_n\|_1\leqs r(s+\dl)+\ep/2\leqs rs+\ep,
    \end{aligned}
    $$
    and the inequality asserted in $(1)$ follows.
\item: Here the estimates are similar. Start by choosing $m$ so that $\|P_Az_n\|_1\leqs \dbz(A)+\dl$ for all $n> m$; note that  $\|P_Az_n\|_1\leqs\|z_n\|_1\leqs t$ for $n\in[m]$ . Next, choose $p$ as in the part above. Then for all $n\geqs p$,
    $$
    \begin{aligned}
    \|P_Ay_n\|_1 &\leqs\sum_j |x_n(j)| \|P_A z_j\|_1\\
    &= \sum_{j> m} |x_n(j)| \|P_A z_j\|_1 + \sum_{j\in[m]} |x_n(j)| \|P_A z_j\|_1\\
    &\leqs \|x_n\|_1 (\dbz(A)+\dl) + t\|1_{[[m]}x_n\|_1\\
    &\leqs r(\dbz(A)+\dl)+\ep/2\leqs r\dbz(A)+\ep,
    \end{aligned}
    $$
    and  the desired inequality follows.
\end{enumerate}
\end{proof}
 \begin{Th}\label{th:Tonlone}
 Let $T\in L(\lone)$, and let $\bz=\hT\bde$.
 \begin{enumerate}
 \item If $\bz\in \loo(\lone,\overset{p}{\to}0)$, then $\hT$ maps $\loo(\lone,\overset{p}{\to}0)$ into itself.
 \item If $\bz\in \loo(\lone,\overset{u}{\to}0)$, then $\hT$ maps $\loo(\lone,\overset{p}{\to}0)$ into $\loo(\lone,\overset{u}{\to}0)$.
 \item If $\bz\in \nonat(\lone)$, then $\hT$ maps $\loo(\lone,\overset{p}{\to}0)$ into $\nonat(\lone)$; in particular, $\hT$ maps $\nonat(\lone)$ into itself.
 \end{enumerate}
 In either case, $\hT\bx\ll\bz$ for all $\bx\in \loo(\lone,\overset{p}{\to}0)$.
Finally, the operators $T$ appearing in the assumptions or assertions of $(1)$--$(3)$ form closed subspaces of $L(\lone)$.
 \end{Th}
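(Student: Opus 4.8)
The plan is to obtain everything from the preceding Lemma by specializing the set $A$ and then invoking the known structure of the three target spaces. As a standing remark I would first note that the Lemma applies throughout: for any $\bx\in\loo(\lone,\overset{p}{\to}0)$ the image $\by=\hT\bx$ is bounded, since $\|\by\|_\oo\leqs\|T\|\,\|\bx\|_\oo$, and $\bz=\hT\bde$ satisfies $t=\|\bz\|_\oo=\|T\|$. Part (1) then follows by applying part (1) of the Lemma to the singletons $A=\{k\}$: the hypothesis $\bz\in\loo(\lone,\overset{p}{\to}0)$ says $\|P_{\{k\}}z_n\|_\oo=|z_n(k)|\to0$ for every $k$, so $\limsup_n|y_n(k)|\leqs r\cdot0=0$, whence $\by\in\loo(\lone,\overset{p}{\to}0)$. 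Part (2) is the same argument with the single choice $A=\N$: here $\|P_\N z_n\|_\oo=\|z_n\|_\oo\to0$ forces $\|y_n\|_\oo\to0$, i.e.\ $\by\in\loo(\lone,\overset{u}{\to}0)$.

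For part (3) I would use part (2) of the Lemma. Given $\ep>0$, I choose $\dl>0$ with $r\dl\leqs\ep$ and, since $\bz\in\nonat(\lone)$, a $\dl$-fine finite partition $(A_i)$ of $\N$ for $\bz$; by the Lemma this same partition is $r\dl$-fine, hence $\ep$-fine, for $\by$, so $\by\in\nonat(\lone)$. The ``in particular'' clause is then immediate from the inclusion $\nonat(\lone)\sbs\loo(\lone,\overset{p}{\to}0)$ displayed earlier. The domination statement $\hT\bx\ll\bz$ is read straight off the pointwise estimate $\dby(A)\leqs r\,\dbz(A)$ of the Lemma: for $\ep>0$ the choice $\dl=\ep/r$ (the case $r=0$ being trivial) verifies the definition of $\ll$.

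For the closing assertion I would introduce the evaluation map $\Phi\colon L(\lone)\to\loo(\lone)$, $\Phi(T)=\hT\bde=(Te_n)_n$. It is linear (because $\widehat{T+S}=\hT+\hat S$) and in fact isometric, since for operators on $\lone$ one has $\|T\|=\sup_n\|Te_n\|_1=\|\hT\bde\|_\oo$; in particular it is bounded. Each of the three target spaces $\loo(\lone,\overset{p}{\to}0)$, $\loo(\lone,\overset{u}{\to}0)$, $\nonat(\lone)$ is a closed subspace of $\loo(\lone)$ -- the first two by definition, the third by Theorem \ref{th:nonatloolone}. Hence the classes of operators singled out by the assumptions of (1)--(3) are the preimages $\Phi^{-1}(\loo(\lone,\overset{p}{\to}0))$, $\Phi^{-1}(\loo(\lone,\overset{u}{\to}0))$, $\Phi^{-1}(\nonat(\lone))$ of closed subspaces under a bounded linear map, and are therefore themselves closed subspaces of $L(\lone)$.

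It remains to verify that the ``assertion'' classes coincide with these ``assumption'' classes, and this is the one step calling for an idea rather than routine bookkeeping, so I regard it as the main point. The forward implications are precisely parts (1)--(3) proved above. For the converse the key observation is that $\bde=(e_n)$ itself lies in $\loo(\lone,\overset{p}{\to}0)$, because $e_n(k)=\delta_{nk}\to0$ for each fixed $k$; feeding $\bde$ into the mapping assertion of a given part returns $\hT\bde=\bz$ in the corresponding target space, which is exactly the assumption. Thus assumption and assertion describe the same closed subspace in each of (1)--(3). The only genuine subtlety to flag is that the weaker ``in particular'' statement that $\hT$ maps $\nonat(\lone)$ into itself does \emph{not} on its own recover the assumption of (3), since $\bde\notin\nonat(\lone)$ (its submeasure $d_{\bde}$ equals $1$ on every infinite set); the characterization by evaluation at $\bde$ must therefore be applied to the full assertion $\hT(\loo(\lone,\overset{p}{\to}0))\sbs\nonat(\lone)$ rather than to the restricted one.
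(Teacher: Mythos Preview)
Your argument is correct and follows the paper's own route: parts (1)--(2) via the Lemma with $A=\{k\}$ or $A=\N$, part (3) via part (2) of the Lemma, and the domination claim read off the inequality $\dby(A)\leqs r\,\dbz(A)$. The paper dispatches the final closed-subspace assertion with the single remark ``easy to verify''; your treatment via the isometric evaluation map $\Phi(T)=\hT\bde$ and the observation that assumption and assertion classes coincide (because $\bde\in\loo(\lone,\overset{p}{\to}0)$) is exactly the natural way to unpack that remark.
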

 \begin{proof}

 $(1)$ \& $(2)$:\quad Apply part $(1)$ of the lemma with $A=\{k\}$ ($k\in\N$) or $A=\N$.

$(3)$:\quad Apply part $(2)$ of the lemma. The final statement is easy to verify.
 \end{proof}
 Theorem \ref{th:Tonlone}\,$(3)$ can be used as a tool to produce new nonatomic sequences out of those already known. A simple illustration of that is provided below.
 \begin{Ex}
 Let $\bx=(x_n)\in \nonat(\lone)$, let $T\in L(\lone)$ be determined by the condition  $\hT\bde=\bx$, and let $\bH=(H_n)$ be a sequence of finite nonempty sets in $\N$ such that $h_n=|H_n|\to\oo$. Also, let $\bx_{\bH}=(x_{H_n})\in \loo(\lone,\overset{u}{\to}0)$ be defined as in Definition \ref{LT-defs}. Then
 $$\by=\hT\bx_{\bH}=\bigl(h^{-1}_n\sum_{j\in H_n}x_j\bigr)_n\in \nonat(\lone).$$
 Notice that the choice $H_n=[n]$ ($n\in\N$) yields the simplest case: $(n^{-1}\sum_{j=1}^n x_j)_n \in \nonat(\lone)$.
 \end{Ex}


\begin{thebibliography}{10}
\bibitem{ADL}
N. Alon, L. Drewnowski and T. {\L}uczak,
\textit{Stable Kneser hypergraphs and ideals in $\N$
with the Nikod\'ym property}, Proc. Amer. Math. Soc.
{\bf 137} (2009), 467--471.

\bibitem{DL1}
L. Drewnowski and T. {\L}uczak,
\textit{On nonatomic submeasures on $\N$}, Archiv Math.
{\bf 91} (2008), 76--85.
\bibitem{DL2}
L. Drewnowski and T. {\L}uczak,
\textit{On nonatomic submeasures on $\N$. II},
J.~ Math. Anal. Appl.
{\bf 347} (2008), 442--449.
\bibitem{DL3}
L. Drewnowski and T.{\L}uczak,
\textit{On nonatomic submeasures on $\N$.III},
Archiv Math. {\bf 92} (2009), 377--382.
\bibitem{FP}
R. Filipów and P. Szuca,
\textit{On some questions of Drewnowski and {\L}uczak concerning submeasures},
J.~ Math. Anal. Appl.
{\bf 371} (2010), 651--666.
\end{thebibliography}
\end{document}